\documentclass[12pt,a4paper]{article}
\usepackage{amsmath,amssymb}

\newtheorem{theorem}{Theorem}[section]
\newtheorem{proposition}{Proposition}[section]

\newenvironment{proof}{\noindent Proof:}{$\Box$}

\newcommand{\N}{{\mathbb N}}

\newcommand{\R}{{\mathbb R}}
\newcommand{\C}{{\mathbb C}}

\newcommand{\Ann}{\mbox{{\rm Ann}}}
\newcommand{\myRe}{{\rm Re\,}\,}
\newcommand{\Fsc}{{\mathcal F}}

\newcommand{\Dsc}{{\mathcal D}}

\newcommand{\Db}{{\mathcal D}'_M}

\title{Annihilators of Laurent coefficients of the complex power for 
normal crossing singularity}
\author{Toshinori Oaku}
\date{}
\begin{document}
\maketitle

\begin{abstract}  
Let $f$ be a real-valued real analytic function defined on an open set 
of $\R^n$. 
Then the complex power $f_+^\lambda$ is defined as a distribution with 
a holomorphic parameter $\lambda$. We determine the annihilator
(in the ring of differential operators) of 
each coefficient of the principal part of 
the Laurent expansion of $f_+^\lambda$ about 
$\lambda=-1$ in case $f=0$ has a normal crossing singularity. 
\end{abstract}

\section{Introduction}

Let $\Dsc_X$ be the sheaf of linear differential operators 
with holomorphic coefficients on the $n$-dimensional complex 
affine space $X = \C^n$. 
We denote by $\Dsc_M$ the sheaf theoretic restriction of 
$\Dsc_X$ to the $n$-dimensional real affine space $M = \R^n$, 
which is the sheaf of linear differential operators 
whose coefficients are
complex-valued real analytic functions. 
Let us denote by $\Dsc_0 = (\Dsc_M)_0$, for the sake of brevity, 
the stalk of $\Dsc_M$ (or of $\Dsc_X$) at
the origin $0 \in M$, which is a (left and right) Noetherian ring. 

Let $\Db$ be the sheaf on $M$ of the distributions  
(generalized functions) in the sense of 
L.\ Schwartz.  
In general, for a sheaf $\Fsc$ on $M$ and an open subset $U$ of $M$, 
we denote by $\Gamma(U,\Fsc) = \Fsc(U)$ 
the set of the sections of $\Fsc$ on $U$. 
Let $C^\infty_0(U)$ be the set of the complex-valued $C^\infty$ functions 
defined on $U$ whose support is a compact set contained in $U$. 
Then 
$\Gamma(U,\Db)$ consists of the $\C$-linear maps 
\[
u : C^\infty_0(U) \ni \varphi \longmapsto \langle u,\varphi\rangle \in  \C
\]
which are continuous in the sense that 
$\lim_{j\rightarrow\infty}\langle u,\varphi_j\rangle = 0$ holds for 
any sequence $\{\varphi_j\}$ of $C^\infty_0(U)$ 
if there is a compact set $K \subset U$ such that $\varphi_j = 0$ 
on $U \setminus K$ and 
\[
 \lim_{j\rightarrow\infty}\sup_{x\in U} |\partial^\alpha\varphi_j(x)| = 0
\quad\mbox{for any $\alpha \in \N^n$},
\]
where we use the notation $x = (x_1,\dots,x_n)$, 
$\N = \{0,1,2,\dots\}$ and 
$\partial^\alpha = \partial_1^{\alpha_1}\cdots \partial_n^{\alpha_n}$ 
with $\partial_j = \partial/\partial x_j$. 

For a distribution $u$ defined on an open set $U$ of $M$, 
its annihilator $\Ann_{\Dsc_M} u$ 
in $\Dsc_M$ is defined to be the sheaf 
of left ideals of sections $P$ of  $\Dsc_M$ which annihilate $u$. 
That is, for each open subset $V$ of $U$, we have by definition 
\[
\Gamma(V,\Ann_{\Dsc_M}u) = \{P \in \Dsc_M(V) \mid Pu = 0 \mbox{ on } V\}.
\]
Its stalk $\Ann_{\Dsc_0} u$ at $0 \in M$ is a left ideal of $\Dsc_0$. 

Now let $f$ be a real-valued real analytic function defined on an 
open set $U$ of $M$. 
Then for a complex number $\lambda$ with non-negative real part 
($\myRe \lambda \geq 0$), the distribution $f_+^\lambda$ is defined 
to be the locally integrable function
\[
 f_+^\lambda(x) := \left\{\begin{array}{ll}
 f(x)^\lambda = \exp(\lambda\log f(x)) & \mbox{if $f(x) > 0$}\\
 0 & \mbox{if $f(x) \leq 0$}
\end{array}\right.
\]
on $U$ 
and is holomorphic with respect to $\lambda$ for $\myRe \lambda > 0$.

For each $x_0 \in U$, there exist a nonzero polynomial $b_{f,x_0}(s)$ 
in an indeterminate $s$ and some $P(s) \in (\Dsc_{M})_{x_0}[s]$ such that
\[
 b_{f,x_0}(\lambda)f_+^\lambda = P(\lambda)f_+^{\lambda+1}
\]
holds in a neighborhood of $x_0$ for $\myRe \lambda > 0$. 
It follows that $f_+^\lambda$ is a distribution-valued meromorphic 
function on the whole complex plane $\C$ with respect to $\lambda$. 
This is called the complex power, 
and for a compactly supported $C^\infty$-function $\varphi$ on $U$, 
the meromorphic function 
$\langle f_+^\lambda, \varphi\rangle$ 
in $\lambda$ is called the local zeta function
(see, e.g., \cite{I}). 

By virtue of Kashiwara's theorem on the rationality of $b$-functions 
(\cite{K}), 
the poles of $f_+^\lambda$ are negative rational numbers. 
Let $\lambda_0$ be a pole of $f_+^\lambda$ and $x_0$ be a point of $U$.  
Then there exist a positive integer $m$, an open neighborhood $V$ 
of $x_0$, an open neighborhood $W$ of $\lambda_0$ in $\C$, 
and distributions $u_k$ defined on $V$ such that
\[
f_+^\lambda = u_{-m}(\lambda-\lambda_0)^{-m}
+ 
\cdots + u_{-1}(\lambda-\lambda_0)^{-1} + u_0 + u_1(\lambda-\lambda_0) 
+ \cdots
\]
holds as distribution on $V$ for any 
$\lambda \in W \setminus \{\lambda_0 \}$. 
To determine the poles of $f_+^\lambda$, and its 
Laurent expansion at each pole is an interesting problem and
has been investigated by many authors. 

From the viewpoint of $D$-module theory, it would be interesting if 
we can compute the annihilator of each Laurent coefficient as above 
explicitly. 
For example, we compared the annihilator of the residue of $f_+^\lambda$ 
at $\lambda = -1$ with that of local cohomology group supported 
on $f=0$ in \cite{O}. 

In this paper, we treat the case where $f=0$ has 
a normal crossing singularity at the origin and 
determine the annihilators of the coefficients of the negative degree 
part of the Laurent expansion  
about $\lambda = -1$. 
The two dimensional case was treated in \cite{O}.

\section{Main results}

Let $x = (x_1,\dots,x_n)$ be the coordinate of $M = \R^n$. 

\begin{proposition}\label{prop:main}
The distribution $(x_1\cdots x_n)_+^\lambda$ has a pole of order $n$ 
at $\lambda = -1$.
Let
\[
(x_1\cdots x_n)_+^\lambda = 
\sum_{j=-n}^\infty (\lambda+1)^ju_j
\]
be the Laurent expansion of the distribution $(x_1\cdots x_n)_+^\lambda$ 
with respect to the holomorphic parameter $\lambda$ about 
$\lambda = -1$, with $u_j \in \Db(M)$ for $j \geq -n$. 
Then for $k=0,1,\dots,n-1$, 
the left ideal
$\Ann_{\Dsc_0} u_{-n+k}$ of $\Dsc_0$ is generated by 
\[
x_{j_1}\cdots x_{j_{k+1}} \quad (1 \leq j_1 < \cdots < j_{k+1} \leq n), 
\quad
x_1\partial_1 - x_i\partial_i \quad (2 \leq i \leq n). 
\]
\end{proposition}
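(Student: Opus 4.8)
The plan is to reduce everything to the one--variable powers $(x_i)_\pm^\lambda$ by means of the sign decomposition of $f=x_1\cdots x_n$. On $\{f\neq 0\}$ the product is positive exactly when an even number of its factors are negative, so for $\myRe\lambda>0$ one has the locally integrable identity
\[
(x_1\cdots x_n)_+^\lambda=\sum_{\substack{S\subseteq\{1,\dots,n\}\\ |S|\ \mathrm{even}}}\ \bigotimes_{i\notin S}(x_i)_+^\lambda\ \bigotimes_{i\in S}(x_i)_-^\lambda ,
\]
which then holds as meromorphic distributions by continuation. Each one--variable factor $(x_i)_+^\lambda$ and $(x_i)_-^\lambda$ is meromorphic with a simple pole at $\lambda=-1$ and residue $\delta(x_i)$, so each summand is a tensor product of $n$ Laurent series starting in degree $-1$. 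Multiplying them shows the pole of $(x_1\cdots x_n)_+^\lambda$ has order exactly $n$ (the leading coefficient is the nonzero multiple $2^{n-1}\delta(x_1)\cdots\delta(x_n)$), and it gives the shape of $u_{-n+k}$: it is a finite $\C$--linear combination of tensor products in which \emph{at least} $n-k$ of the $n$ factors equal $\delta(x_i)$ and the remaining (at most $k$) factors are higher Laurent coefficients of $(x_i)_\pm^\lambda$. I would record the needed one--variable data (the residue $\delta$, the finite parts regularizing $x_\pm^{-1}$, and the relations $x\cdot\delta=0$ and $x\cdot(\text{finite part})=x_+^{0}$) as a preliminary lemma.

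Granting this description, the containment $I_k\subseteq\Ann_{\Dsc_0}u_{-n+k}$, where $I_k$ denotes the left ideal generated by the listed operators, is the easy half. For the operators $x_1\partial_1-x_i\partial_i$: since $f$ is homogeneous of degree one in each variable, $x_i\partial_i f_+^\lambda=\lambda f_+^\lambda$ holds on $\{f\neq0\}$ for $\myRe\lambda$ large, where $f_+^\lambda$ is $C^1$; as $x_1\partial_1-x_i\partial_i$ is tangent to every coordinate hyperplane there is no boundary contribution, so $(x_1\partial_1-x_i\partial_i)f_+^\lambda=0$ for all $\lambda$ by continuation, hence it annihilates every Laurent coefficient $u_m$. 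For a monomial $x_{j_1}\cdots x_{j_{k+1}}$ of degree $k+1$: in each tensor term of $u_{-n+k}$ at most $k$ factors are non-$\delta$, so among the $k+1$ distinct indices $j_1,\dots,j_{k+1}$ at least one, say $j_\ell$, carries a $\delta(x_{j_\ell})$; then $x_{j_\ell}\delta(x_{j_\ell})=0$ kills that term, and summing over terms gives $x_{j_1}\cdots x_{j_{k+1}}u_{-n+k}=0$.

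The substantial part is the reverse inclusion $\Ann_{\Dsc_0}u_{-n+k}\subseteq I_k$. My plan is to show that the cyclic map $\Dsc_0/I_k\to\Dsc_0 u_{-n+k}$, $1\mapsto u_{-n+k}$, which is well defined and surjective by the previous paragraph, is injective. I would first analyze $M_k:=\Dsc_0/I_k$ directly: putting operators in normal form via the commutation relations, the Euler generators identify all the operators $x_i\partial_i$ in the quotient while the degree-$(k+1)$ monomial generators truncate the $x$--grading, and I expect this to exhibit $M_k$ as a holonomic module whose characteristic cycle can be read off (supported on the conormals to the coordinate subspaces of codimension between $n-k$ and $n$). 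Independently I would bound $\Dsc_0 u_{-n+k}$ from below by producing explicit distributions in it --- applying monomials and the $\partial_i$ to $u_{-n+k}$ and identifying the results through the one--variable finite-part data --- so as to match the two characteristic cycles and force the surjection to be an isomorphism.

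The hard part will be this last step: computing $M_k=\Dsc_0/I_k$ precisely and matching it against $\Dsc_0 u_{-n+k}$. The delicacy is that, unlike $u_{-n}$ (a multiple of $\delta(x_1)\cdots\delta(x_n)$, supported at the origin, whose annihilator is the classical $(x_1,\dots,x_n)$), the coefficients $u_{-n+k}$ for $k\ge 1$ involve the one--variable finite parts, which are \emph{not} supported at the origin; thus $u_{-n+k}$ has components distributed over the various coordinate subspaces of codimension $n-k$, and one must track all of them simultaneously to certify that no operator outside $I_k$ annihilates the whole combination. Verifying the requisite linear independence of the distributions obtained by acting on $u_{-n+k}$ --- equivalently, that the lower bound on $\Dsc_0 u_{-n+k}$ meets the upper bound coming from $M_k$ --- is where the real work lies.
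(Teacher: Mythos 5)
Your description of the Laurent coefficients is correct and matches the paper's: the same decomposition of $(x_1\cdots x_n)_+^\lambda$ over sign vectors with an even number of minus signs, the same one-variable expansion (the paper writes the higher coefficients as $\frac{1}{j!}\partial_t(\log t_+)^j$), the leading term $2^{n-1}\delta(x_1)\cdots\delta(x_n)$, and the fact that every term of $u_{-n+k}$ carries at least $n-k$ delta factors. The easy inclusion $I_k\subseteq\Ann_{\Dsc_0}u_{-n+k}$ is also argued essentially as in the paper. The problem is the reverse inclusion, which you explicitly leave as a plan: you propose to compute the characteristic cycle of $\Dsc_0/I_k$ and match it against a lower bound for $\Dsc_0u_{-n+k}$. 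That step is not carried out, and it is where the entire content of the proposition lies; as it stands the proposal does not prove the statement. Moreover, the route you sketch is considerably heavier than necessary (one would need the multiplicities of all the conormals to coordinate subspaces on both sides, and the module generated by the one-variable finite parts already contributes conormals beyond the obvious ones, e.g.\ $t\,\partial_t\log t_+$ is the Heaviside function), so even granting the strategy, substantial unaddressed work remains.

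The idea you are missing is an elementary inductive reduction that avoids characteristic cycles altogether. First, the identity $\partial_ix_i(x_1\cdots x_n)_+^\lambda=(\lambda+1)(x_1\cdots x_n)_+^\lambda$ gives $\partial_ix_i\,u_{-m}=u_{-m-1}$ for $m\leq n-1$; this is the structural fact that links consecutive Laurent coefficients and sets up an induction on $k$, with base case $k=0$ the classical annihilator $(x_1,\dots,x_n)$ of $\delta(x_1)\cdots\delta(x_n)$. Given $Pu_{-n+k}=0$, one divides $P=\sum_iQ_i\partial_ix_i+R$ with $R=\sum a_{\alpha,\beta}x^\alpha\partial^\beta$ subject to $\alpha_i\beta_i=0$ for all $i$. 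Restricting $u_{-n+k}$ to an open set where $k$ chosen coordinates are positive reduces it to $2^{n-k-1}$ times a product of $n-k$ deltas and $k$ factors $x_i^{-1}$; since $\partial_ix_i$ kills both $\delta(x_i)$ and $x_i^{-1}$, the equation $Ru_{-n+k}=0$ there forces, by linear independence of the resulting distributions, the vanishing of all $a_{\alpha,\beta}$ with at most $k$ nonzero components of $\alpha$. Hence $R$ lies in the ideal of degree-$(k+1)$ squarefree monomials and annihilates $u_{-n+k}$, leaving $\bigl(\sum_iQ_i\bigr)u_{-n+k-1}=0$, to which the induction hypothesis applies; a short rewriting of $\sum_iQ_i\partial_ix_i$ modulo the generators then puts $P$ in $I_k$. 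If you want to salvage your own write-up, replacing the characteristic-cycle step by this division-plus-restriction argument is the most direct repair.
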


\begin{proof}
In one variable $t$, we have
\begin{align*}
t^\lambda_+ &= 
(\lambda+1)^{-1}\partial_t t_+^{\lambda+1}
\\&=
(\lambda+1)^{-1}\partial_t
\left\{Y(t) + 
\sum_{j=1}^\infty \frac{1}{j!}(\lambda+1)^j(\log t_+)^j
\right\}
\\&=
(\lambda+1)^{-1}\delta(t) + 
\sum_{j=1}^\infty \frac{1}{j!}(\lambda+1)^{j-1}\partial_t(\log t_+)^j,
\end{align*}
where $(\log t_+)^j$ is the distribution defined by the pairing
\[
\langle (\log t_+)^j,\,\varphi\rangle 
= \int_0^\infty (\log t)^j\varphi(t)\,dt
\]
for $\varphi \in C^\infty_0(\R)$. 

Let us introduce the following notation: 
\begin{itemize}
\item
For a nonnegative integer $j$, we set
\[
 h_j(t) = \left\{ \begin{array}{ll} 
\delta(t)  & (j=0), \\
\frac{1}{j!}\partial_t(\log t_+)^j \quad & ( j \geq 1) 
\end{array}\right.
\]
with $\partial_t = \partial/\partial_t$ and 
\[
h_\alpha(x) = h_{\alpha_1}(x_1)\cdots h_{\alpha_n}(x_n)
\] 
for a multi-index $\alpha = (\alpha_1,\dots,\alpha_n) \in \N^n$. 
\item
For a multi-index $\alpha = (\alpha_1,\dots,\alpha_n) \in \N^n$, 
we set 
\[
|\alpha| = \alpha_1 + \cdots + \alpha_n,\quad
[\alpha] = \max\{ \alpha_i \mid 1 \leq i \leq n\}.
\]
\item 
Set 
$S(n) = \{\sigma = (\sigma_1,\dots,\sigma_n) \in \{1,-1\}^n \mid 
\sigma_1\cdots \sigma_n = 1\}$. 
\end{itemize}

Since
\[
(x_1\cdots x_n)_+^\lambda
= \sum_{\sigma \in S(n)}
(\sigma_1 x_1)_+^\lambda\cdots(\sigma_n x_n)_+^\lambda, 
\]
we have
\begin{align*}
u_{-n+k}(x) &= \sum_{\sigma\in S(n)}\sum_{|\alpha|=k} 
h_\alpha(\sigma x). 
\end{align*}

In particular, we have
\[
u_{-n}(x) = \sum_{\sigma\in S(n)} \delta(\sigma_1x_1)\cdots\delta(\sigma_nx_n)
= 2^{n-1}\delta(x_1)\cdots\delta(x_n).
\]
It follows that $\Ann_{\Dsc_0}u_{-n}$ is generated by 
$x_1,\dots,x_n$. This proves the assertion for $k=0$ 
since $x_1\partial_1 - x_i\partial_i = \partial_1x_1 - \partial_ix_i$ 
belongs to the left ideal of $\Dsc_0$ generated by $x_1,\dots,x_n$. 

We shall prove the assertion by induction on $k$. 
Assume $k \geq 1$ and $P \in \Dsc_0$ annihilates 
$u_{-n+k}$, that is, $Pu_{-n+k}=0$ holds on a neighborhood of $0 \in M$.
By division, there exist $Q_1,\dots,Q_r,R \in \Dsc_0$ such that
\begin{align}
P &= Q_1\partial_1x_1 + \cdots + Q_n\partial_nx_n + R,
\label{eq:Pdivision}
\\
R &= \sum_{\alpha_1\beta_1 = \cdots =\alpha_n\beta_n = 0} 
a_{\alpha,\beta} x^\alpha\partial^\beta
\qquad (a_{\alpha,\beta} \in \C).
\nonumber
\end{align}
Since 
\begin{equation}\label{eq:u_-n+k}
u_{-n+k}(x)
= \sum_{\sigma\in S(n)}\sum_{|\alpha|=k,\,[\alpha]=1}  h_\alpha(\sigma x)
+ \sum_{\sigma\in S(n)}\sum_{|\alpha|=k,\,[\alpha] \geq 2} 
h_\alpha(\sigma x), 
\end{equation}
we have
\begin{align*}
u_{-n+k}(x)
&= 2^{n-k-1}\delta(x_{1})\cdots\delta(x_{n-k})h_1(x_{n-k+1})\cdots h_1(x_n)
\\&
= 2^{n-k-1}\delta(x_{1})\cdots\delta(x_{n-k})
\frac{1}{x_{n-k+1}}\cdots \frac{1}{x_n}
\end{align*}
on the domain $x_{n-k+1} > 0,\dots,x_{n} > 0$. 
Note that $\partial_ix_i$ annihilates both $\delta(x_i)$ 
and $x_i^{-1}$. 
Hence
\begin{align*}
0 &= Pu_{-n+k} = Ru_{-n+k}
\\&
= \sum_{\alpha_1=\cdots=\alpha_{n-k}=0,
\alpha_{n-k+1}\beta_{n-k+1} = \cdots = \alpha_n\beta_n = 0}
(-1)^{\beta_{n-k+1}+\cdots+\beta_n} 
\beta_{n-k+1}!\cdots\beta_n!a_{\alpha,\beta}
\\\quad&
\delta^{(\beta_1)}(x_1)\cdots \delta^{(\beta_{n-k})}(x_{n-k})
x_{n-k+1}^{\alpha_{n-k+1}-\beta_{n-k+1}-1}\cdots
x_{n}^{\alpha_{n}-\beta_{n}-1}
\end{align*}
holds on $\{x \in M \mid x_{n-k+1} > 0,\dots,x_n > 0\} \cap V$ 
with an open neighborhood $V$ of the origin.  
Hence $a_{\alpha,\beta}=0$ holds if 
$\alpha_1=\cdots=\alpha_{n-k}=0$. 
 
In the same way, we conclude that 
$a_{\alpha,\beta}=0$ if the components of $\alpha$ are zero except 
at most $k$ components. 
This implies that $R$ is contained in the left ideal generated by 
$x_{j_1}\cdots x_{j_{k+1}}$ with $1 \leq j_1 < \cdots < j_{k+1} 
\leq n$. 

In the right-hand-side of (\ref{eq:u_-n+k}), 
each term contains the product of at least $n-k$ delta functions.  
Hence $x_{j_1}\cdots x_{j_{k+1}}$ with $1 \leq j_1 < \cdots < j_{k+1} \leq n$, 
and consequently $R$ also, 
annihilates $u_{-n+k}(x)$. 
Hence we have
\[
0 = Pu_{-n+k} = \sum_{i=1}^nQ_i\partial_ix_i u_{-n+k}.
\]
On the other hand, 
since
\[
\partial_ix_i(x_1\cdots x_n)_+^\lambda
= (x_i\partial_i+1)(x_1\cdots x_n)_+^\lambda
= (\lambda+1)(x_1\cdots x_n)_+^\lambda, 
\]
we have
\[
\partial_ix_i u_{-k} = u_{-k-1}
\qquad (k \leq n-1,\,\, 1 \leq i \leq n)
\]
and consequently
\[
0 = \sum_{i=1}^nQ_i\partial_ix_i u_{-n+k} = \sum_{i=1}^n Q_i u_{-n+k-1}. 
\]
By the induction hypothesis, 
$\sum_{i=1}^n Q_i$ belongs to the left ideal of $\Dsc_0$ generated by
\[
x_{j_1}\cdots x_{j_{k}} \quad (1 \leq j_1 < \cdots < j_{k} \leq n), 
\quad
x_1\partial_1 - x_i\partial_i \quad (2 \leq i \leq n). 
\] 
Now rewrite (\ref{eq:Pdivision}) in the form
\begin{align*}
P &= 
\sum_{i=1}^n Q_i \partial_1x_1 
+ \sum_{i=2}^{n} Q_i(\partial_ix_i - \partial_1x_1) 
+ R. 
\end{align*}
If $j_1 > 1$, we have
\[
x_{j_1}\cdots x_{j_k}\partial_1x_1 
= \partial_1x_1x_{j_1}\cdots x_{j_k} .
\]
If $j_1 = 1$, let $l$ be an integer with $2 \leq l \leq n$ 
such that $l \neq j_2,\dots,l \neq j_k$. 
Then we have
\[
x_{j_1}\cdots x_{j_k}\partial_1x_1 
= x_{j_2}\cdots x_{j_k}x_1\partial_1x_1
= x_{j_2}\cdots x_{j_k}x_1(\partial_1x_1 - \partial_lx_l)
+ \partial_l x_{j_2}\cdots x_{j_k}x_1x_l.
\]
We conclude that $P$ belongs to the left ideal generated by
\[
x_{j_1}\cdots x_{j_{k+1}} \quad 
(1 \leq j_1 < \cdots < j_{k+1} \leq n), 
\quad
x_1\partial_1 - x_i\partial_i \quad (2 \leq i \leq n). 
\]
Conversely it is easy to see that these generators 
annihilate $u_{-n+k}$ since 
\[
x_1\partial_1(x_1\cdots x_n)_+^\lambda 
= x_i\partial_i(x_1\cdots x_n)_+^\lambda
=  \lambda(x_1\cdots x_n)_+^\lambda
\]
and each term of (\ref{eq:u_-n+k}) contains the product of 
at least $n-k$ delta functions.  
\end{proof}

\begin{theorem}
Let $f_1,\dots,f_m$ be real-valued real analytic functions defined on a 
neighborhood of the origin of $M = \R^n$ such that 
$df_1\wedge\cdots\wedge df_m \neq 0$. 
Let 
\[
(f_1\cdots f_m)_+^\lambda 
= \sum_{j=-m}^\infty (\lambda+1)^ju_j
\]
be the Laurent expansion about $\lambda=-1$, 
with each $u_j$ being a distribution defined on a common neighborhood of 
the origin.   
Let $v_1,\dots,v_n$ be real analytic vector fields defined on a 
neighborhood of the origin which are linearly independent
and satisfy
\[
v_i(f_j) = \left\{\begin{array}{ll} 1 \quad & (\mbox{if $i=j \leq m$})
   \\ 0 & (\mbox{otherwise})
\end{array}\right.
\]
Then for $k=0,1,\dots,m-1$, the annihilator $\Ann_{\Dsc_0} u_{-m+k}$ 
is generated by 
\begin{align*}&
f_{j_1}\cdots f_{j_{k+1}} 
\quad (1 \leq j_1 < \cdots < j_{k+1} \leq m), 
\\&
f_1v_1 - f_iv_i \quad (2 \leq i \leq m), 
\quad 
v_j \quad (m+1 \leq j \leq n). 
\end{align*}
\end{theorem}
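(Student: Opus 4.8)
The plan is to reduce the statement to Proposition~\ref{prop:main} by a local change of coordinates. Since $df_1\wedge\cdots\wedge df_m\neq 0$ at the origin, the functions $f_1,\dots,f_m$ can be completed to a real analytic coordinate system $y=(y_1,\dots,y_n)$ with $y_i=f_i$ for $1\le i\le m$, defined on a neighborhood of the origin. In these coordinates the product $f_1\cdots f_m$ equals $y_1\cdots y_m$, and the two products have the same sign, so the diffeomorphism carries $(f_1\cdots f_m)_+^\lambda$ to $(y_1\cdots y_m)_+^\lambda$, regarded as a distribution on $\R^n$ that is constant in $y_{m+1},\dots,y_n$. Because a diffeomorphism induces an isomorphism of the sheaf of distributions as a $\Dsc_M$-module, it carries $\Ann_{\Dsc_0}u_{-m+k}$ isomorphically onto the annihilator of the corresponding Laurent coefficient in the $y$-coordinates. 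The first thing I would do is record this transport carefully, since it is the point where one must check that the $\Dsc$-action is compatible with the coordinate change.

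Next I would compute the annihilator in the $y$-coordinates. Writing $y'=(y_1,\dots,y_m)$ and $y''=(y_{m+1},\dots,y_n)$, the Laurent coefficient is $\tilde u_{-m+k}\otimes 1$, where $\tilde u_{-m+k}$ is the coefficient for $(y_1\cdots y_m)_+^\lambda$ as a distribution in $y'$ and $1$ is the constant function in $y''$. Proposition~\ref{prop:main}, applied in the $m$ variables $y'$, gives the annihilator of $\tilde u_{-m+k}$ in the ring of operators in $y'$, generated by the $y_{j_1}\cdots y_{j_{k+1}}$ and the $y_1\partial_{y_1}-y_i\partial_{y_i}$. I would then invoke the standard computation of the annihilator of an external tensor product (of the $\Dsc$-module generated by $\tilde u_{-m+k}$ with the one generated by the constant $1$, whose annihilator is $\langle\partial_{y_{m+1}},\dots,\partial_{y_n}\rangle$): the $\Dsc_0$-annihilator of $\tilde u_{-m+k}\otimes 1$ is generated by the generators just listed together with $\partial_{y_{m+1}},\dots,\partial_{y_n}$.

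It remains to match this generating set with the one in the statement. Expanding each given vector field in the coordinate frame, $v_i=\sum_{l=1}^n c_{il}\partial_{y_l}$ with real analytic $c_{il}$, the conditions $v_i(f_j)=v_i(y_j)=\delta_{ij}$ for $j\le m$ force $c_{ij}=\delta_{ij}$ for all $i$ and all $j\le m$. Hence $v_i=\partial_{y_i}+\sum_{l>m}c_{il}\partial_{y_l}$ for $i\le m$, while $v_i=\sum_{l>m}c_{il}\partial_{y_l}$ for $i>m$. Linear independence of $v_1,\dots,v_n$ then forces the matrix $(c_{il})_{m<i,l\le n}$ to be invertible at the origin, so $\langle v_{m+1},\dots,v_n\rangle=\langle\partial_{y_{m+1}},\dots,\partial_{y_n}\rangle$ as left ideals in $\Dsc_0$. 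Working modulo this common ideal, $f_iv_i=y_i\partial_{y_i}+\sum_{l>m}y_ic_{il}\partial_{y_l}\equiv y_i\partial_{y_i}$, so $f_1v_1-f_iv_i\equiv y_1\partial_{y_1}-y_i\partial_{y_i}$, and of course $f_{j_1}\cdots f_{j_{k+1}}=y_{j_1}\cdots y_{j_{k+1}}$. Therefore the left ideal generated by the stated operators coincides with the transported annihilator, which completes the proof.

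I expect the main obstacle to be the first paragraph: verifying that the annihilator is genuinely transported by the coordinate change requires care about how $\Dsc_M$ acts on distributions under a diffeomorphism (the density and Jacobian bookkeeping), whereas the tensor-product computation and the final linear-algebra comparison of generators are routine once the reduction to the model $(y_1\cdots y_m)_+^\lambda$ is in place.
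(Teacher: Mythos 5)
Your proof is correct and follows essentially the same route as the paper: straighten the $f_i$ into coordinates $y_i$, observe that the Laurent coefficients are constant in $y_{m+1},\dots,y_n$, and apply Proposition~\ref{prop:main} in the first $m$ variables. In fact your final step --- comparing the left ideal generated by $f_1v_1-f_iv_i$ and $v_{m+1},\dots,v_n$ with the model generators, rather than asserting that the $v_j$ themselves can be straightened to $\partial/\partial y_j$ --- is more careful than the paper's one-line reduction, which as literally stated would require the vector fields $v_1,\dots,v_n$ to commute.
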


\begin{proof}
By a local coordinate transformation, we may assume that 
$f_j = x_j$ for $j=1,\dots,m$, 
and $v_j = \partial/\partial x_j$ for $j=1,\dots,n$. 
Then the distribution $u_j$ does not depend on $x_{m+1},\dots,x_n$. 
Hence we have only to apply Proposition \ref{prop:main} 
in $\R^m$. 
\end{proof}


\end{document}